\theoremstyle{theorem}
\newtheorem{theorem}{Theorem}
\newtheorem{proposition}[theorem]{Proposition}
\newtheorem{lemma}[theorem]{Lemma}
\theoremstyle{definition}
\begin{document}
%MSC Subject Classification: 20D15 Finite nilpotent groups
\title{A Note on Finite Nilpotent Groups}
\markright{Nilpotent Groups}
\author{Nicholas J. Werner}

\maketitle

\begin{abstract}
\noindent It is well known that if $G$ is a group and $H$ is a normal subgroup of $G$ of finite index $k$, then $x^k \in H$ for every $x \in G$. We examine finite groups $G$ with the property that $x^k \in H$ for every subgroup $H$ of $G$, where $k$ is the index of $H$ in $G$. We prove that a finite group $G$ satisfies this property if and only if $G$ is nilpotent.
\end{abstract}

{\color{red}
\noindent \textbf{Author's Note added July 11, 2024}. I have learned that the results of this paper have already appeared in:\\
\indent L. Sabatini. Products of subgroups, subnormality, and relative orders\\
\indent of elements. Ars Math. Contemp. 24,  no. 1 (2024), 1--9. \href{https://doi.org/10.26493/1855-3974.2975.1b2}{[DOI]}\\
}

\noindent For a group $G$ and a subgroup $H \leq G$, let $[G:H]$ denote the index of $H$ in $G$. A common exercise in a first course on abstract algebra is to prove that if $H$ is normal in $G$ and $[G:H]$ is finite, then $x^{[G:H]} \in H$ for all $x \in G$. This result need not hold when $H$ is not normal. For example, take $G$ to be the symmetric group $S_3$, let $\sigma$ and $\tau$ be distinct transpositions in $G$, and let $H = \langle \sigma \rangle$. Then, $[G:H] = 3$, but $\tau^3 = \tau \notin H$. 

One could then ask: for which groups do we always have $x^{[G:H]} \in H$? We will restrict our focus to finite groups, and we will say that a finite group $G$ has property $(\ast)$ if the following holds:
\begin{equation}\label{star}\tag{$\ast$}
\text{ for each } H \leq G \text{ and for all } x \in G, x^{[G:H]} \in H.
\end{equation}
Certainly, \eqref{star} is true for finite Abelian groups, and more generally for any finite group with all subgroups normal (such as the quaternion group $Q_8$). However, the requirement that every subgroup of $G$ be normal in $G$ is not necessary for \eqref{star} to hold. For instance, the dihedral group of order 8 contains subgroups that are not normal, and it is straightforward to verify that this group satisfies \eqref{star}.

In this note, we prove that a finite group satisfies \eqref{star} if and only if it is nilpotent. Typically, nilpotent groups are defined using upper central series. For this, one constructs the chain of normal subgroups
\begin{equation*}
Z_0 \trianglelefteq Z_1 \trianglelefteq Z_2 \trianglelefteq \cdots
\end{equation*}
where $Z_0$ is the trivial subgroup, $Z_1$ is the center of $G$, and $Z_{i+1}$ is chosen so that $Z_{i+1}/Z_i$ equals the center of $G/Z_i$. The group $G$ is said to be \textit{nilpotent} if the above chain terminates at $G$ in a finite number of steps. When $G$ is finite, there are a variety of other ways to characterize nilpotent groups. The proposition below summarizes a number of these conditions; a proof can be found in many algebra texts such as \cite[Section 6.1]{DF}.

\begin{proposition}\label{nilpotent conditions} Let $G$ be a finite group. The following are equivalent.
\begin{enumerate}
\item[(1)] $G$ is nilpotent.
\item[(2)] Every proper subgroup of $G$ is a proper subgroup of its normalizer in $G$.
\item[(3)] Every Sylow subgroup of $G$ is normal.
\item[(4)] $G$ is isomorphic to the direct product of all of its Sylow subgroups.
\item[(5)] Every maximal subgroup of $G$ is normal.
%\item $G$ has a normal subgroup of order $d$ for each divisor $d$ of $|G|$.
\end{enumerate}
\end{proposition}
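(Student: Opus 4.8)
The plan is to establish that the five conditions are equivalent by proving a single cycle of implications $(1) \Rightarrow (2) \Rightarrow (5) \Rightarrow (3) \Rightarrow (4) \Rightarrow (1)$, drawing on a small number of standard group-theoretic tools: the upper central series, the Frattini argument, and the nilpotency of finite $p$-groups.

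First, for $(1) \Rightarrow (2)$, I would exploit the upper central series $Z_0 \trianglelefteq Z_1 \trianglelefteq \cdots \trianglelefteq Z_n = G$ guaranteed by nilpotency. Given a proper subgroup $H$, since $Z_0 = 1 \leq H$ while $Z_n = G \not\leq H$, there is a largest index $i$ with $Z_i \leq H$ but $Z_{i+1} \not\leq H$. Choosing $x \in Z_{i+1} \setminus H$, the defining property $Z_{i+1}/Z_i = Z(G/Z_i)$ forces $[x,h] \in Z_i \leq H$ for every $h \in H$, whence $x$ normalizes $H$ but lies outside it, so $H < N_G(H)$.

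Next, $(2) \Rightarrow (5)$ is immediate: a maximal subgroup $M$ satisfies $M < N_G(M) \leq G$, so $N_G(M) = G$ and $M$ is normal. For $(5) \Rightarrow (3)$, I would argue by contradiction. If a Sylow $p$-subgroup $P$ is not normal, then $N_G(P)$ is proper and hence contained in some maximal subgroup $M$, which is normal by $(5)$; since $P \leq N_G(P) \leq M$, $P$ is a Sylow $p$-subgroup of $M$, and the Frattini argument yields $G = M\, N_G(P) = M$, a contradiction. Then $(3) \Rightarrow (4)$ follows by noting that distinct normal Sylow subgroups intersect trivially and therefore commute elementwise, so their product is an internal direct product whose order equals $|G|$. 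Finally $(4) \Rightarrow (1)$ uses that each Sylow $p$-subgroup is nilpotent (a finite $p$-group has nontrivial centre, so its upper central series climbs to the whole group) together with the fact that a direct product of nilpotent groups is nilpotent.

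I expect the main obstacle to be the implication $(1) \Rightarrow (2)$, since it is the only step that must engage directly with the definition of nilpotency through the upper central series and requires the careful choice of the index $i$ together with the commutator computation showing $x \in N_G(H)$. The remaining implications are comparatively mechanical once the Frattini argument is in hand, though in $(4) \Rightarrow (1)$ one should take care to justify that a finite $p$-group is nilpotent rather than assume it outright.
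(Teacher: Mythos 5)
Your proposed cycle of implications is correct. Note, however, that the paper itself offers no proof of this proposition---it simply cites \cite[Section 6.1]{DF}---and your argument (upper central series for $(1)\Rightarrow(2)$, the Frattini argument for $(5)\Rightarrow(3)$, coprimality and trivial commutators for $(3)\Rightarrow(4)$) is essentially the standard textbook proof found in that reference.
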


Additional characterizations of finite nilpotent groups have been given in \cite{Cocke, Holmes, Wong}. To prove that \eqref{star} can be added to this list of equivalent conditions, we begin by showing that \eqref{star} holds for any finite $p$-group. For this, we require two facts about finite $p$-groups. First, every maximal subgroup of a finite $p$-group has index $p$. Second, every maximal subgroup of a $p$-group is normal. We refer to \cite[Section 6.1]{DF} for proofs of these properties.

\begin{lemma}\label{p-group lemma}
Let $p$ be a prime and let $G$ be a group of order $p^n$, where $n \geq 1$. Then, $G$ satisfies \eqref{star}.
\end{lemma}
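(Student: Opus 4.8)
The plan is to argue by induction on $n$, where $|G| = p^n$. The base case $n = 1$ is immediate: a group of order $p$ has only the trivial subgroup and $G$ itself, so for $H = \{e\}$ every $x \in G$ satisfies $x^p = e \in H$, while for $H = G$ the condition is vacuous since $[G:G] = 1$. Thus \eqref{star} holds when $n = 1$.

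For the inductive step, I would assume that every group of order $p^k$ with $1 \le k < n$ satisfies \eqref{star}, and let $G$ have order $p^n$. Fix a subgroup $H \le G$ and an element $x \in G$. If $H = G$ then $[G:H] = 1$ and there is nothing to prove, so I may assume $H$ is proper. The key structural move is to pass through a maximal subgroup: since $H$ is proper, it is contained in some maximal subgroup $M$ of $G$, and here I invoke the two quoted facts about $p$-groups, namely that $M$ has index $p$ in $G$ and is normal in $G$. Because $M$ is a normal subgroup of index $p$, the elementary result recalled at the outset of the note gives $x^p \in M$ for every $x \in G$.

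Now $M$ has order $p^{n-1}$, so by the inductive hypothesis $M$ itself satisfies \eqref{star}. Applying this to the subgroup $H \le M$ and to the element $x^p \in M$ yields $(x^p)^{[M:H]} \in H$. To finish, I would combine the index computations: by multiplicativity of indices, $[G:H] = [G:M]\,[M:H] = p\,[M:H]$, so that $(x^p)^{[M:H]} = x^{\,p\,[M:H]} = x^{[G:H]}$, and hence $x^{[G:H]} \in H$, completing the induction.

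The argument is short, and the only genuine content is the reduction to a maximal subgroup, so I do not anticipate a serious obstacle. The single point requiring care is that $x$ is an arbitrary element of all of $G$, not of $M$, which is exactly why one must first force $x^p$ into $M$ before invoking the inductive hypothesis inside $M$; the factor of $p$ gained from the index of the maximal subgroup is then precisely what bridges the exponent $[M:H]$ up to $[G:H]$.
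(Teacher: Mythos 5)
Your proof is correct and follows essentially the same route as the paper: induction on $n$, reduction to a maximal subgroup $M$ containing $H$, using that $M$ is normal of index $p$ to get $x^p \in M$, and then applying the inductive hypothesis to $M$. The only cosmetic difference is that you phrase the index bookkeeping via $[G:H] = p\,[M:H]$ rather than writing $[G:H] = p^k$ and $[M:H] = p^{k-1}$ as the paper does; the content is identical.
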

\begin{proof}
We use induction on $n$. If $n=1$, then $G$ is cyclic of order $p$ and the result is clear. So, assume that $n > 1$ and that \eqref{star} holds for all groups of order $p^{n-1}$.

Let $H$ be a proper subgroup of $G$. Then, there exists a maximal subgroup $M \leq G$ such that $H \leq M$. Let $[G:H] = p^k$, where $k \geq 1$. Since $M$ is maximal in $G$, the index of $M$ in $G$ equals $p$. Consequently, $[M:H] = p^{k-1}$. 

Since $M \trianglelefteq G$, $x^p \in M$ for all $x \in G$. By induction, \eqref{star} holds for $M$. Thus, for every $x \in G$, $x^{p^k} = (x^p)^{p^{k-1}} \in H$.
\end{proof}

\begin{lemma}\label{Subgroup lemma}
Let $G = G_1 \times \cdots \times G_n$, where $G_i$ is a finite group for each $1 \leq i \leq n$ and $\gcd(|G_i|, |G_j|) = 1$ whenever $i \ne j$. Then, every subgroup of $G$ has the form $H_1 \times \cdots \times H_n$, where $H_i \leq G_i$ for all $1 \leq i \leq n$.
\end{lemma}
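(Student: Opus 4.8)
The plan is to prove that every subgroup $K$ of $G = G_1 \times \cdots \times G_n$ decomposes as $H_1 \times \cdots \times H_n$, where $H_i$ is the projection of $K$ onto $G_i$. The key structural fact driving the proof is the coprimality hypothesis $\gcd(|G_i|, |G_j|) = 1$ for $i \neq j$, which forces the projections to "fill out" the full product rather than sitting inside some diagonal subgroup.

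Let me explain the setup I would use.

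\begin{proof}
For each $i$, let $\pi_i \colon G \to G_i$ denote the canonical projection, and set $H_i = \pi_i(K)$, so that $H_i \leq G_i$. Clearly $K \leq H_1 \times \cdots \times H_n$, and I only need to establish the reverse inclusion. It suffices to show that for each index $i$, the full ``slice'' $1 \times \cdots \times H_i \times \cdots \times 1$ lies inside $K$; since $K$ is a subgroup, the product of these slices, which is exactly $H_1 \times \cdots \times H_n$, will then be contained in $K$.

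Fix $i$ and let $h \in H_i$. By definition there is an element $x = (x_1, \dots, x_n) \in K$ with $x_i = h$. Write $m = \prod_{j \neq i} |G_j|$ for the product of the orders of the remaining factors, and set $d = |G_i|$. The coprimality hypothesis guarantees $\gcd(m, d) = 1$, so there exists an integer $t$ with $tm \equiv 1 \pmod{d}$. Now consider the element $x^{tm} \in K$. In the $j$-th coordinate with $j \neq i$, we have $(x_j)^{tm}$; since $|G_j|$ divides $m$, this coordinate is trivial. In the $i$-th coordinate, we have $h^{tm} = h^{1 + sd} = h$ for some integer $s$, because $h^d = 1$. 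Hence $x^{tm} = (1, \dots, 1, h, 1, \dots, 1)$, the slice element carrying $h$ in position $i$. This shows every such slice element lies in $K$, completing the argument.
\end{proof}

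\noindent I expect the main conceptual point to be the number-theoretic maneuver in the second paragraph: raising a group element to the power $tm$ simultaneously kills every unwanted coordinate (because those orders divide $m$) while preserving the desired coordinate (because $tm \equiv 1$ modulo its order). Everything else is routine subgroup bookkeeping, so this coprimality-driven exponent choice is the crux of the proof.
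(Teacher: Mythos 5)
Your proof is correct and rests on the same key idea as the paper's: raising an element of the subgroup to an exponent that is divisible by the orders of all the unwanted factors yet congruent to $1$ modulo the order of the desired coordinate, so that the power lands on a single ``slice.'' The only cosmetic difference is that you treat all $n$ factors at once via projections, whereas the paper proves the two-factor case (describing $H_1$ and $H_2$ as intersections with the factors) and then appeals to induction.
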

\begin{proof}
This is a standard result, but we include a proof for the sake of completeness. It suffices to prove the lemma for the case where $G = G_1 \times G_2$. The general result then follows by induction.

Let $n_1 = |G_1|$ and $n_2 = |G_2|$. Assume that $G_1$ and $G_2$ have identity elements $e_1$ and $e_2$, respectively. Let $H \leq G$ and let $(x,y) \in H$. We first show that both $(x,e_2)$ and $(e_1,y)$ are in $H$. Since $\gcd(n_1,n_2)=1$, we see that $n_2$ is coprime to $|x|$ and $n_1$ is coprime to $|y|$. So, $x^{n_2}$ generates $\langle x \rangle$, and $y^{n_1}$ generates $\langle y \rangle$. Let $k_1, k_2$ be integers such that $x = x^{n_2k_2}$ and $y = y^{n_1 k_1}$. Then,
\begin{equation*}
(x,y)^{n_2k_2} = (x^{n_2k_2},e_2) = (x,e_2),
\end{equation*}
and likewise $(x,y)^{n_1k_1} = (e_1,y)$. Thus, $(x,e_2), (e_1,y) \in H$.

Now, let $H_1 = \{(h_1, h_2) \in H \mid h_2 = e_2\}$ and $H_2 = \{(h_1, h_2) \in H \mid h_1 = e_1\}$. Both $H_1$ and $H_2$ are normal subgroups of $H$, and their intersection is trivial. Moreover, by the previous paragraph, $H = H_1 H_2$. So, $H = H_1H_2 = H_1 \times H_2$, as desired.
\end{proof}

A subgroup $H$ of $G$ such that $|H|$ and $[G:H]$ are coprime is called a \textit{Hall subgroup} of $G$. For a Hall subgroup $H$, the condition that $x^{[G:H]} \in H$ for all $x \in G$ implies that $H \trianglelefteq G$.

\begin{lemma}\label{Hall subgroup lemma}
Let $G$ be a finite group and let $H \leq G$ be such that $|H|$ and $[G:H]$ are coprime. If $x^{[G:H]} \in H$ for all $x \in G$, then $H$ is normal in $G$.
\end{lemma}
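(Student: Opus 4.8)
The plan is to show that the hypothesis forces $H$ to coincide \emph{exactly} with the set of $[G:H]$-th powers of elements of $G$, after which normality is immediate because this set is visibly closed under conjugation. Write $k = [G:H]$ and $m = |H|$, so that $\gcd(k,m) = 1$ by assumption. The hypothesis $x^{k} \in H$ for all $x \in G$ is precisely the inclusion $\{x^{k} : x \in G\} \subseteq H$, so the real task is to establish the reverse inclusion $H \subseteq \{x^{k} : x \in G\}$.

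For that reverse inclusion I would argue that the $k$-th power map is surjective when restricted to $H$, and this is exactly where coprimality enters. Since $\gcd(k,m) = 1$, choose an integer $k'$ with $kk' \equiv 1 \pmod{m}$. For any $h \in H$, Lagrange's theorem gives $h^{m} = e$, so writing $kk' = 1 + tm$ yields $(h^{k'})^{k} = h^{kk'} = h \cdot (h^{m})^{t} = h$. Because $h^{k'} \in H$, every element of $H$ is the $k$-th power of an element of $H$, and in particular of an element of $G$. Combining this with the hypothesis gives the set equality
\begin{equation*}
H = \{x^{k} : x \in G\}.
\end{equation*}

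Finally I would invoke that the set of $k$-th powers is invariant under conjugation: for any $g, x \in G$ we have $g x^{k} g^{-1} = (g x g^{-1})^{k}$, and conjugation by $g$ permutes the elements of $G$. Hence
\begin{equation*}
g H g^{-1} = \{ g x^{k} g^{-1} : x \in G \} = \{ (g x g^{-1})^{k} : x \in G \} = \{ y^{k} : y \in G \} = H,
\end{equation*}
so $H \trianglelefteq G$, as desired.

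The single delicate point is the surjectivity of the power map on $H$; the rest is formal. It is worth noting why the coprimality hypothesis is essential here: the same power map need not be surjective (or even injective) on all of $G$, since $\gcd(k, |G|) = \gcd(k, km) = k$ is typically not $1$, so one cannot hope to identify $H$ with the $k$-th powers without restricting attention to the Hall subgroup. This is the main obstacle, but once the set equality $H = \{x^{k} : x \in G\}$ is in hand, normality drops out with no further work.
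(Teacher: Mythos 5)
Your proof is correct and is essentially the paper's argument in a different wrapper: both hinge on the two facts that coprimality makes every $h \in H$ a $k$-th power of an element of $H$ (the paper inverts $k$ modulo $|h|$ where you invert it modulo $|H|$, an immaterial difference) and that conjugation commutes with the $k$-th power map, so $ghg^{-1} = (gh^{k'}g^{-1})^{k} \in H$. Your packaging of this as the set identity $H = \{x^{k} : x \in G\}$ is a small bonus --- it exhibits $H$ as a verbally defined subset invariant under every automorphism, hence characteristic in $G$, not merely normal --- but the underlying computation is the same as the paper's.
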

\begin{proof}
Let $k=[G:H]$ and assume that $x^k \in H$ for all $x \in G$. Let $h \in H$. Since $k$ is coprime to $|H|$, it is also coprime to $|h|$. Thus, $\langle h^k \rangle = \langle h \rangle$, and so $h =h^{km}$ for some integer $m$. For each $g \in G$, we have
\begin{equation*}
ghg^{-1} = gh^{km}g^{-1} = (gh^mg^{-1})^k \in H,
\end{equation*}
so $H$ is normal in $G$.
\end{proof}

We can now prove that \eqref{star} characterizes finite nilpotent groups. In fact, it is enough that the condition in \eqref{star} holds only for Sylow subgroups of $G$.

\begin{theorem} Let $G$ be a finite group. The following are equivalent.
\begin{enumerate}
\item[(1)] $G$ is nilpotent.
\item[(2)] $G$ satisfies \eqref{star}.
\item[(3)] For each Sylow subgroup $P$ of $G$ and for all $x \in G$, $x^{[G:P]} \in P$.
\end{enumerate}
\end{theorem}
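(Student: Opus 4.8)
The plan is to establish the cycle of implications $(1) \Rightarrow (2) \Rightarrow (3) \Rightarrow (1)$, using the three preceding lemmas together with the equivalent characterizations of nilpotency collected in Proposition~\ref{nilpotent conditions}. Two of the three implications are short; the bulk of the work sits in transferring Lemma~\ref{p-group lemma} from a single $p$-group to a direct product of Sylow subgroups.

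For $(1) \Rightarrow (2)$, I would begin from the fact that a finite nilpotent group is the internal direct product of its Sylow subgroups (Proposition~\ref{nilpotent conditions}, condition~(4)), writing $G = P_1 \times \cdots \times P_r$ where the orders $|P_i|$ are pairwise coprime prime powers. Given any subgroup $H \leq G$, Lemma~\ref{Subgroup lemma} decomposes it as $H = H_1 \times \cdots \times H_r$ with $H_i \leq P_i$, so that $[G:H] = \prod_{i} [P_i : H_i]$. The idea is then to verify membership coordinatewise: for $x = (x_1, \ldots, x_r) \in G$, I would fix an index $i$, split the exponent as $[G:H] = [P_i : H_i]\, m_i$ with $m_i = \prod_{j \ne i} [P_j : H_j]$, and apply Lemma~\ref{p-group lemma} to the $p$-group $P_i$ to get $x_i^{[P_i:H_i]} \in H_i$. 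Raising this element to the $m_i$ power keeps it inside the subgroup $H_i$, so $x_i^{[G:H]} \in H_i$ for every $i$, and hence $x^{[G:H]} \in H$. This coordinatewise passage is where the genuine content lies, since it is the mechanism that promotes the prime-power case to the general nilpotent case.

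The implication $(2) \Rightarrow (3)$ is immediate, as condition~(3) is merely the special case of \eqref{star} in which $H$ is restricted to range over the Sylow subgroups of $G$. For $(3) \Rightarrow (1)$, I would observe that each Sylow subgroup $P$ of $G$ is automatically a Hall subgroup, since $|P|$ and $[G:P]$ are coprime by definition. Condition~(3) then says precisely that $x^{[G:P]} \in P$ for all $x \in G$, so Lemma~\ref{Hall subgroup lemma} forces $P \trianglelefteq G$. Thus every Sylow subgroup of $G$ is normal, and Proposition~\ref{nilpotent conditions}, condition~(3), yields that $G$ is nilpotent.

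The main obstacle is the first implication; the other two are essentially bookkeeping once the lemmas are in hand. Within $(1) \Rightarrow (2)$, the points that require care are confirming that $[G:H]$ really factors through the direct-product decomposition as claimed, and that the "extra" exponent $m_i$ is harmless because a subgroup is closed under taking arbitrary powers of its elements. Neither is difficult, but both must be stated cleanly for the coordinatewise argument to go through.
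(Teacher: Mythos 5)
Your proposal is correct and follows essentially the same route as the paper: decompose a nilpotent $G$ into its Sylow subgroups, apply Lemma~\ref{Subgroup lemma} and Lemma~\ref{p-group lemma} coordinatewise for $(1) \Rightarrow (2)$, and use Lemma~\ref{Hall subgroup lemma} on Sylow (hence Hall) subgroups for $(3) \Rightarrow (1)$. The only cosmetic difference is that the paper separates out the case where $G$ is a $p$-group in the last implication, which your uniform argument handles anyway.
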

\begin{proof}
(1) $\Rightarrow$ (2) Assume that $G$ is nilpotent and let $p_1, \ldots, p_n$ be all of the distinct prime divisors of $|G|$. Since $G$ is nilpotent, $G$ has a unique Sylow $p_i$-subgroup $P_i$ for each $1 \leq i \leq n$, and we can assume (up to isomorphism) that $G = P_1 \times \cdots \times P_n$. Let $H \leq G$. By Lemma \ref{Subgroup lemma}, $H = H_1 \times \cdots \times H_n$, where $H_i \leq P_i$ for all $1 \leq i \leq n$. Let $k = [G:H]$ and let $x = (g_1, \ldots, g_n) \in G$. For each $i$, $[P_i:H_i]$ divides $k$, and \eqref{star} holds for $P_i$ by Lemma \ref{p-group lemma}, so $g_i^k \in H_i$. Thus, $x^k = (g_1^k, \ldots, g_n^k) \in H$.

(2) $\Rightarrow$ (3) This is clear.

(3) $\Rightarrow$ (1) Assume that $x^{[G:P]} \in P$ whenever $x \in G$ and $P$ is a Sylow subgroup of $G$. If $G$ is a $p$-group, then $G$ is nilpotent and we are done. If $G$ is not a $p$-group, then we can apply Lemma \ref{Hall subgroup lemma} to conclude that each Sylow subgroup of $G$ is normal in $G$. Hence, $G$ is nilpotent in this case as well.
\end{proof}

\end{document}